\documentclass[12pt]{amsart}

\textwidth=15cm
\textheight=22.5cm
\topmargin=0.5cm
\oddsidemargin=0.5cm
\evensidemargin=0.5cm

\usepackage{amssymb}
\usepackage{graphicx,color}
\usepackage{cleveref}

\newtheorem{theorem}{Theorem}[section]

\newtheorem{lemma}[theorem]{Lemma}

\theoremstyle{definition}
\newtheorem{definition}[theorem]{Definition}

\newtheorem{example}[theorem]{Example}

\newtheorem{question}[theorem]{Question}

\newtheorem{remark}[theorem]{Remark}


\newcommand{\FF}{ \ensuremath{\mathbb{F}}}

\makeatletter
\def\moverlay{\mathpalette\mov@rlay}
\def\mov@rlay#1#2{\leavevmode\vtop{%
   \baselineskip\z@skip \lineskiplimit-\maxdimen
   \ialign{\hfil$\m@th#1##$\hfil\cr#2\crcr}}}
\newcommand{\charfusion}[3][\mathord]{
    #1{\ifx#1\mathop\vphantom{#2}\fi
        \mathpalette\mov@rlay{#2\cr#3}
      }
    \ifx#1\mathop\expandafter\displaylimits\fi}
\makeatother
\newcommand{\bigcupdot}{\charfusion[\mathop]{\bigcup}{\cdot}}

\newcommand{\lk}{{\mathrm{lk}}}


\begin{document}

\title{A balanced non-partitionable Cohen-Macaulay complex}

\author[M. Juhnke-Kubitzke]{Martina Juhnke-Kubitzke}
\email{juhnke-kubitzke@uni-osnabrueck.de}
\author[L. Venturello]{Lorenzo Venturello}
\email{lorenzo.venturello@uni-osnabrueck.de}
\address{
Universit\"{a}t Osnabr\"{u}ck,
Fakult\"{a}t f\"{u}r Mathematik,
Albrechtstra\ss e 28a,
49076 Osnabr\"{u}ck, GERMANY
}

\date{\today}

\thanks{
Both authors were supported by the German Research Council DFG GRK-1916.
}
\keywords{simplicial complex, balancedness, Cohen-Macaulay, partitionability}
\subjclass[2010]{05E45, 13F55}

\begin{abstract}
In a recent paper, Duval, Goeckner, Klivans and Martin disproved the longstanding conjecture by Stanley, that every Cohen-Macaulay simplicial complex is  partitionable. We construct counterexamples to this conjecture that are even \emph{balanced}, i.e., their underlying graph has a minimal coloring. This answers a question by Duval et al. in the negative.
\end{abstract}

\maketitle

\tableofcontents

\section{Introduction}
Undoubtedly, Cohen-Macaulay simplicial complexes are among the best studied classes of simplicial complexes in topological combinatorics and combinatorial commutative algebra and they have been proven to be extremely useful for various problems in these areas. The most prominent such example is probably provided by Stanley's proof of the Upper Bound Conjecture for spheres, which also marks the birth of Cohen-Macaulay complexes \cite{Stanley-UBC}. Though the original definition by Stanley is algebraic, Reisner \cite{Reisner}~--~using results of Hochster \cite{Hochster}~--~could show that Cohen-Macaulayness is a purely topological property. In particular, all triangulations of balls and spheres are known to be Cohen-Macaulay. 
One of the longstanding conjectures concerning this class of simplicial complexes is the so-called \emph{Partitionability Conjecture} by Stanley \cite[p. 14]{St79} (for all Cohen-Macaulay simplicial complexes) and Garsia \cite{GAR} (for barycentric subdivisions), stating that every Cohen-Macaulay simplicial complex is partitionable. An affirmative answer to this conjecture would also have provided a combinatorial interpretation of the $h$-vectors of Cohen-Macaulay complexes. However, last year, Duval, Goeckner, Klivans and Martin \cite{DGKM} provided an infinite family of non-partitionable Cohen-Macaulay simplicial complexes~--~together with a general construction method for such counterexamples~--, and thereby disproved Stanley's conjecture. Even though we now know that the Partitionability Conjecture is false in full generality, one could still hope for a more restricted version to be true. In particular, Duval et al. suggested the following question, which is the main focus of this article \cite[Question 4.2]{DGKM}.

\begin{question}\label{mainquestion}
Is every balanced Cohen-Macaulay simplicial complex partitionable?
\end{question}

We recall that a $(d-1)$-dimensional simplicial complex is called \emph{balanced} if its underlying graph is $d$-colorable (in the graph-theoretic sense). Balanced simplicial complexes were introduced by Stanley \cite{St79} and they comprise Coxeter complexes, Tits buildings and also barycentric subdivisions of regular CW complexes. Hence, a counterexample to the Partitionability Conjecture for barycentric subdivisions, as proposed by Garsia \cite{GAR}, would also answer \Cref{mainquestion} in the negative.

Using the technique introduced in \cite{DGKM}, we construct an infinite family of balanced non-partitionable Cohen-Macaulay complexes. The main idea is to start with the counterexample from \cite{DGKM} and to \emph{remove} the obvious obstructions to balancedness. We will make this idea more precise in \Cref{section:construction}. Indeed, our counterexample can be obtained from the one in \cite{DGKM} by performing a finite number of edge subdivisions. As in \cite{DGKM}, our counterexample is not only Cohen-Macaulay but even constructible. As such, it is the first example of a balanced constructible non-partitionable simplicial complex \cite[\S 4]{Hachimori}. 

The article is structured as follows. In \Cref{sect:Combinatorics}, we introduce the necessary background on the combinatorics of simplicial complexes and recall the construction of the counterexample from \cite{DGKM}. Building on this, \Cref{section:construction} contains the construction of our counterexample (see Theorems \ref{mainresult1} and \ref{mainresult2}).

\section{Background on simplicial complexes}\label{sect:Combinatorics}
We recall basics on (relative) simplicial complexes, including some of their combinatorial and algebraic properties. We refer to \cite{BH-book} and \cite{Stanley-greenBook} for more details.

Given a finite set $V$, an (abstract) \emph{simplicial complex} $\Delta$ on the vertex set $V$ is a collection of subsets of $V$ that is closed under inclusion. In the following, we will write $V(\Delta)$ for the vertex set of a simplicial complex $\Delta$. 
Throughout this paper, all simplicial complexes are assumed to be finite. 
Elements of $\Delta$ are called \emph{faces} of $\Delta$ 
and inclusion-maximal faces are called \emph{facets} of $\Delta$. The \emph{dimension} of a face $F \in \Delta$ is its cardinality minus one, and the \emph{dimension} of $\Delta$ is defined as $\dim\Delta:=\max\{\dim F~:~F\in \Delta\}$. $0$-dimensional and $1$-dimensional faces are called \emph{vertices} and \emph{edges}, respectively. A simplicial complex $\Delta$ is \emph{pure} if all its facets have the same dimension. The \emph{link} of a face $F\in \Delta$ is the subcomplex
$$
\lk_{\Delta}(F)=\{G\in\Delta~:~G\cap F=\emptyset,\; G\cup F\in \Delta\}.
$$
A subcomplex $\Gamma\subseteq \Delta$ is \emph{induced} if for any $F\subseteq V(\Gamma)$ with $F\in \Delta$, it holds that $F\in \Gamma$. We write $\Gamma=\Delta_{V(\Gamma)}$ in this case. 
A \emph{relative simplicial complex} is a pair $(\Delta,\Gamma)$ of simplicial complexes, where $\Gamma\subseteq \Delta$ is a subcomplex of $\Delta$. As for usual simplicial complexes,  elements of $\Delta\setminus \Gamma$ are called \emph{faces} of $(\Delta,\Gamma)$ and the \emph{dimension} of $(\Delta,\Gamma)$ is the maximal dimension of a face in $\Delta\setminus\Gamma$. Other notions from arbitrary simplicial complexes carry over to relative simplicial complexes in exactly the same way. If $(\Delta,\Gamma)$ is a relative simplicial complex and $\Omega$ is a simplicial complex, then the pair $(\Delta\cup\Omega,\Gamma\cup \Omega)$ represents the same relative simplicial complex as $(\Delta,\Gamma)$ and every pair of simplicial complexes representing $(\Delta,\Gamma)$ arises in this way. In particular, every relative simplicial complex $(\Delta,\Gamma)$ has a unique minimal representation $(\overline{\Omega},\overline{\Omega}\setminus \Omega)$, where $\Omega=\Delta\setminus \Gamma$ and 
$$
\overline{\Omega}=\{F~:~F\subseteq G \mbox{ for some } G\in \Omega\}
$$ 
is the minimal simplicial complex containing $\Omega$. We also call $\overline{\Omega}$ the \emph{combinatorial closure} of $\Omega$. We will make use of this minimal representation of relative simplicial complexes in the construction of our counterexample. 

The \emph{$f$-vector} of a $(d-1)$-dimensional (relative) simplicial complex $\Delta$ is $f(\Delta)=(f_{-1}(\Delta), f_0(\Delta), \ldots, f_{d-1}(\Delta))$, where $f_i(\Delta)$ denotes the number of $i$-dimensional faces of $\Delta$. The \emph{$h$-vector} $h(\Delta)=(h_0(\Delta),h_1(\Delta),\ldots,h_d(\Delta))$ of $\Delta$ is defined by the relation
$$
\sum_{i=0}^{d}f_{i-1}(\Delta)(t-1)^{d-i}=\sum_{i=0}^dh_i(\Delta)t^{d-i}.
$$

We now turn to several combinatorial and algebraic properties of simplicial complexes that will be of importance for this article.

A $(d-1$)-dimensional simplicial complex $\Delta$ is called \emph{balanced} if its underlying graph is $d$-colorable, that is, there exists a map $\kappa:\;V(\Delta)\to [d]=\{1,\ldots,d\}$ such that $\kappa(v)\not=\kappa(w)$ if $\{v, w\}\in \Delta$. Balanced simplicial complexes were originally introduced by Stanley \cite{St79} and prominent examples of such simplicial complexes are provided by barycentric subdivisions, Coxeter complexes and Tits buildings. 

We now recall 
the definition of partitionability which goes back to Ball \cite{Ball} and Provan \cite{Provan}. 
\begin{definition}
A pure (relative) simplicial complex $\Delta$ with facets $F_1,\ldots,F_n$ is called \emph{partitionable} if there exists a partitioning of $\Delta$ into pairwise disjoint Boolean intervals
$$
\Delta=\bigcupdot_{i=1}^n[R_i,F_i],
$$
where $[R_i,F_i]=\{G\in \Delta~:~R_i\subseteq G\subseteq F_i\}$. 
\end{definition}
It was shown by Stanley \cite[Proposition III.2.3]{Stanley-greenBook} that the $h$-vector of a partitionable simplicial complex $\Delta$ has the following combinatorial interpretation: 
\begin{equation}\label{eq:partition}
h_i(\Delta)=\#\{1\leq j\leq n~:~\#R_j=i\}.
\end{equation}
 In particular, all $h$-vector entries are non-negative in this case.  

We now define three classes of simplicial complexes that are known to share the same set of $h$-vectors \cite[Theorem 6]{Stanley77}: shellable, constructible and Cohen-Macaulay simplicial complexes. 

A pure simplicial complex $\Delta$ is \emph{shellable} if there exists an ordering $F_1,\ldots,F_n$ of the facets of $\Delta$ such that for each $1\leq i\leq n$ there exists a unique minimal element $R_i$ in 
$$\{G\subseteq F_i~:~G\not\subseteq F_j\mbox{ for } 1\leq j\leq i-1\}.$$
Since, obviously, $\bigcupdot_{i=1}^n[R_i,F_i]$ is a partitioning of $\Delta$, the $h$-vector of a shellable simplicial complex can be computed using \eqref{eq:partition}.

A $(d-1)$-dimensional simplicial complex $\Delta$ is \emph{constructible} if $\Delta$ is a simplex or $\Delta=\Delta_1\cup\Delta_2$, where $\Delta_1$ and $\Delta_2$ are constructible $(d-1)$-dimensional simplicial complexes and $\Delta_1\cap\Delta_2$ is constructible of dimension $d-2$. 

In the following, let $\FF$ be an arbitrary field. A $(d-1)$-dimensional simplicial complex  $\Delta$ is \emph{Cohen-Macaulay} over $\FF$ if and only if, for every face $F\in \Delta$ (including the empty face), $\tilde{\beta}_i(\lk_\Delta(F);\FF)=0$ for all $i \ne d-1-\#F$ (see \cite[Corollary II.4.2]{Stanley-greenBook}). Here, we use $\tilde{\beta}_i(\Gamma;\FF):=\dim_{\FF}\widetilde{H}_i(\Gamma;\FF)$ to denote the dimension of the $i$\textsuperscript{th}
 reduced homology group of a simplicial complex $\Gamma$ over $\FF$. 

The following implications between the just described classes of simplicial complexes are well-known:
 \begin{align*}
&\mbox{shellable} \qquad \Rightarrow \qquad \mbox{constructible} \qquad \Rightarrow\qquad \mbox{Cohen-Macaulay}\\
&\qquad\Downarrow\\
&\mbox {partitionable}.
 \end{align*}
Examples of constructible 3-balls that are not shellable due to Rudin \cite{Rudin} and Ziegler \cite{Ziegler} show that the left implication is strict. Moreover, any triangulation of the dunce hat is known to be Cohen-Macaulay but not constructible \cite[\S 2]{Hachimori08}, which means that also the right implication is strict. It is natural to ask how partitionability is related to constructibility and Cohen-Macaulayness. While already more than $30$ years ago Bj\"orner constructed a partitionable simplicial complex that is not Cohen-Macaulay and hence neither constructible nor shellable \cite[p. 85]{Stanley-greenBook}, the \emph{Partitionability Conjecture} due to  Stanley \cite{St79} (for all Cohen-Macaulay complexes) and Garsia (for barycentric subdivisions) stated that Cohen-Macaulayness implies partitionability. It was shown only last year by Duval, Goeckner, Klivans and Martin \cite{DGKM} that there do also exist constructible, hence Cohen-Macaulay simplicial complexes that are not partitionable. 
Since our construction of a balanced non-partitionable Cohen-Macaulay complex, is essentially a balanced version of the  example from \cite{DGKM}, we now recall this construction.

\begin{example}\label{example:DGKM}
The construction in \cite{DGKM} starts with a particular subcomplex $Q$ of Ziegler's famous example of a non-shellable $3$-ball on $10$ vertices, labeled $0,..,9$ \cite{Ziegler}. More precisely, the subcomplex $Q$ is the combinatorial closure of the following set of facets
\begin{align*}
\mathcal{F}(Q)=\{&\{1,2,4,9\}, \{1,2,6,9\},\{1,5,6,9\},\{1,5,8,9\},\{1,4,8,9\},\\
&\{1,4,5,8\},\{1,4,5,7\},\{4,5,7,8\},
\{1,2,5,6\},\{0,1,2,5\},\\
&\{0,2,5,6\},\{0,1,2,3\},\{1,2,3,4\},\{1,3,4,7\}\}.
\end{align*}
Let $A=Q_{\{0,2,3,4,6,7,8\}}$ be the induced subcomplex of $Q$ on vertex set $\{0,2,3,4,6,7,8\}$, i.e., $A$ is the combinatorial closure of 
\begin{equation*}
\{\{0,2,6\},\{0,2,3\},\{2,3,4\},\{3,4,7\},\{4,7,8\}\}.
\end{equation*}
The complexes $A$ and $Q$ are depicted in \Cref{qbar}. 
Theorem 3.4 in \cite{DGKM} shows that glueing together $25$ copies of $Q$ along the subcomplex $A$ produces a non-partitionable, constructible, hence Cohen-Macaulay simplicial complex. In fact, Theorem 3.5 in \cite{DGKM} shows that already $3$ copies of $Q$, identified along $A$, yield such an example. 
However, since $Q$ is not balanced, neither are those examples. One important fact, that was used intensively in \cite{DGKM} and that we will also employ in the next section, is that $\tau=(0\,7)(2\,4)(6\,8)$ is an automorphism of $Q$. 
\end{example}
\begin{figure}[h]
	\centering
	\includegraphics[scale=0.8]{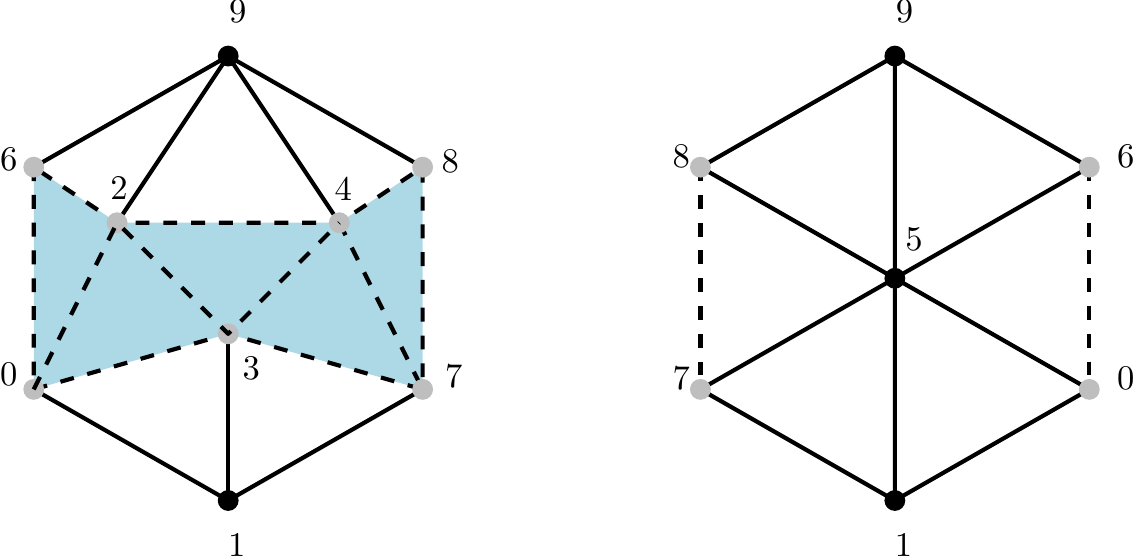}
	\caption{Front (\emph{left}) and back (\emph{right}) view of $Q$. The blue and dashed faces belong to $A$.}
	\label{qbar}
\end{figure}

\section{The construction}\label{section:construction}
In this section, we provide our construction of a balanced non-partitionable Cohen-Macaulay simplicial complex.  

The following main tool from \cite{DGKM} is crucial for this construction.

\begin{theorem}
\cite[Theorem 3.1]{DGKM} \label{keythm}
	Let $X=\left(Q,A\right)$ be a relative simplicial complex such that:
	\begin{enumerate}
		\item $Q$ and $A$ are Cohen-Macaulay;
		\item $A$ is an induced subcomplex of $Q$ of codimension at most 1;
		\item $X$ is not partitionable. 
	\end{enumerate} 
	For a positive integer $N$ let $C_N$ denote the simplicial complex, obtained by identifying $N$ disjoint copies of $Q$ along $A$. If $N$ is larger than the total number of faces of $A$, then $C_N$ is Cohen-Macaulay and not partitionable.
\end{theorem}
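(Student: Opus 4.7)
The plan is to handle the two conclusions separately: non-partitionability follows from a pigeonhole argument on the minima of the Boolean intervals, while Cohen-Macaulayness is a Reisner-criterion check via iterated Mayer--Vietoris.

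\medskip

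\noindent\textbf{Non-partitionability of $C_N$.} Denote the copies of $Q$ by $Q^{(1)},\dots,Q^{(N)}$ and suppose for contradiction that $C_N=\bigcupdot_i[R_i,F_i]$ is a partitioning. Since $A$ has codimension at most $1$ in $Q$, every facet of $C_N$ not contained in $A$ is a facet of a unique copy, so the set $I_j:=\{i\colon F_i\in Q^{(j)}\setminus A\}$ is well-defined. Two observations exploit the fact that $A$ is an \emph{induced} subcomplex: (a) if $R_i\notin A$ then $[R_i,F_i]\subseteq Q^{(j)}\setminus A$ for $i\in I_j$, because any face containing a vertex of $V(Q)\setminus V(A)$ avoids $A$; (b) every face of $Q^{(j)}\setminus A$ appears in some $[R_i,F_i]$ with $i\in I_j$, because its enclosing facet in $C_N$ cannot be shared between two distinct copies. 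Consequently, if for some $j$ all $R_i$ with $i\in I_j$ were to lie in $Q^{(j)}\setminus A$, then $\{[R_i,F_i]\}_{i\in I_j}$ would be a partitioning of $(Q,A)$, violating hypothesis (3). Hence each copy $j$ supplies some interval whose minimum $r_j:=R_i$ lies in $A$; these $r_1,\dots,r_N$ are pairwise distinct, because distinct intervals in a partition have distinct minima. This forces $N$ to be at most the total number of faces of $A$, contradicting the hypothesis on $N$.

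\medskip

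\noindent\textbf{Cohen-Macaulayness of $C_N$.} By Reisner's criterion it suffices to check, for every $F\in C_N$, that $\tilde{H}_i(\lk_{C_N}(F);\FF)=0$ whenever $i<\dim\lk_{C_N}(F)$. If $F\notin A$, then $F$ sits in a unique copy $Q^{(j)}$ and $\lk_{C_N}(F)=\lk_{Q^{(j)}}(F)$, which is Cohen-Macaulay by hypothesis (1). For $F\in A$, including $F=\emptyset$, one has
$$
\lk_{C_N}(F)=\bigcup_{j=1}^N\lk_{Q^{(j)}}(F),\qquad \lk_{Q^{(j)}}(F)\cap\lk_{Q^{(k)}}(F)=\lk_A(F)\quad(j\ne k),
$$
and I would induct on $m$ to show that $K_m:=\bigcup_{j\le m}\lk_{Q^{(j)}}(F)$ is Cohen-Macaulay, using the reduced Mayer--Vietoris sequence
$$
\cdots\to\tilde{H}_i(\lk_A(F))\to\tilde{H}_i(K_{m-1})\oplus\tilde{H}_i(\lk_{Q^{(m)}}(F))\to\tilde{H}_i(K_m)\to\tilde{H}_{i-1}(\lk_A(F))\to\cdots.
$$
Cohen-Macaulayness of $Q$ kills the middle summand for $i<\dim\lk_Q(F)$, while Cohen-Macaulayness of $A$ kills the flanking terms for $i<\dim\lk_A(F)$; the codimension-at-most-one gap between these two thresholds is exactly what is needed to propagate the vanishing from $K_{m-1}$ to $K_m$ at every relevant index.

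\medskip

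\noindent\textbf{Anticipated obstacle.} The pigeonhole step is essentially combinatorial, once one carefully exploits the \emph{induced} subcomplex condition in observation (a). The delicate bookkeeping in the Cohen-Macaulay argument occurs at the boundary index $i=\dim\lk_A(F)$, where $\tilde{H}_i(\lk_A(F))$ may fail to vanish; however, at this index the term $\tilde{H}_i(K_{m-1})$ is zero by the inductive hypothesis and $\tilde{H}_i(\lk_{Q^{(m)}}(F))$ is zero by Cohen-Macaulayness of $Q$, so the cokernel governing $\tilde{H}_i(K_m)$ vanishes regardless.
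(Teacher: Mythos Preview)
The paper does not actually prove this theorem: it is quoted verbatim as \cite[Theorem~3.1]{DGKM} and used as a black box, so there is no ``paper's own proof'' to compare against. That said, your proposal is correct and is essentially the argument given in \cite{DGKM}. The non-partitionability half is exactly their pigeonhole argument, and your observations (a) and (b) correctly isolate where the \emph{induced} hypothesis on $A$ is used. The Cohen--Macaulayness half via Reisner's criterion and iterated Mayer--Vietoris is also the standard route; your identification $K_{m-1}\cap\lk_{Q^{(m)}}(F)=\lk_A(F)$ is the key computation, and the codimension-one hypothesis is precisely what makes $\tilde H_{i-1}(\lk_A(F))$ vanish for all $i<d-1-|F|$.

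Two minor points of phrasing. First, what you actually prove by Mayer--Vietoris is the vanishing of $\tilde H_i(K_m)$ in the required range, not that $K_m$ is Cohen--Macaulay in its own right; the latter would require checking links inside $K_m$, but since $\lk_{K_m}(G)=\lk_{C_m}(F\cup G)$ this is subsumed by running the same argument over all $F$. Second, in your ``anticipated obstacle'' paragraph the word \emph{cokernel} is slightly off: at $i=\dim\lk_A(F)$ the exact sequence reads $0\to\tilde H_i(K_m)\to\tilde H_{i-1}(\lk_A(F))$, so $\tilde H_i(K_m)$ injects into $\tilde H_{i-1}(\lk_A(F))$, and the latter vanishes because $i-1<\dim\lk_A(F)$. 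The potentially nonzero group $\tilde H_i(\lk_A(F))$ sits on the other side of the direct-sum term and is harmless exactly for the reason you state.
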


Our construction makes use of the following basic observation.

\begin{lemma}\label{lem:vertexLinks}
Let $d-1\geq 2$ and let $\Delta$ be a balanced $(d-1)$-dimensional simplicial complex. Then all vertex links of $\Delta$ are balanced.
\end{lemma}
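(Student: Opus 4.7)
My plan is to exhibit a proper coloring of $\lk_\Delta(v)$ using $d-1$ colors by restricting a proper $d$-coloring of $\Delta$, and then to observe that $d-1$ is exactly the right number of colors for balancedness of the link. I would fix, using that $\Delta$ is balanced, a map $\kappa : V(\Delta) \to [d]$ with $\kappa(x) \neq \kappa(y)$ whenever $\{x, y\} \in \Delta$, together with an arbitrary vertex $v \in V(\Delta)$.

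The first step of the argument is the observation that for every $w \in V(\lk_\Delta(v))$ the edge $\{v, w\}$ belongs to $\Delta$ by the definition of the link; hence $\kappa(w) \neq \kappa(v)$, and the restriction $\kappa' := \kappa|_{V(\lk_\Delta(v))}$ takes values in the $(d-1)$-element set $[d] \setminus \{\kappa(v)\}$, which I identify with $[d-1]$ after relabeling. The second step is to verify properness: if $\{w_1, w_2\}$ is an edge of the underlying graph of $\lk_\Delta(v)$, then $\{v, w_1, w_2\}$ is a face of $\Delta$, so in particular $\{w_1, w_2\} \in \Delta$, which forces $\kappa(w_1) \neq \kappa(w_2)$. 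Thus $\kappa'$ is a proper $(d-1)$-coloring of the underlying graph of $\lk_\Delta(v)$.

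It remains to observe that this $(d-1)$-coloring is precisely what is required for $\lk_\Delta(v)$ to be balanced. In the pure $(d-1)$-dimensional setting relevant to the paper, each facet $F$ of $\Delta$ containing $v$ has $d$ vertices, so $F \setminus \{v\}$ is a facet of $\lk_\Delta(v)$ of cardinality $d-1$; hence $\lk_\Delta(v)$ is itself pure of dimension $d-2$, and balancedness amounts exactly to the existence of a $(d-1)$-coloring. The hypothesis $d-1 \geq 2$ only guarantees that $\lk_\Delta(v)$ has dimension at least $1$, so that the statement is non-vacuous. I do not foresee any real obstacle: the entire argument is a one-line restriction of the coloring, combined with the forced absence of the apex color $\kappa(v)$ in the link.
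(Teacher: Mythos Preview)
Your proposal is correct and follows essentially the same approach as the paper: both arguments restrict a proper $d$-coloring $\kappa$ of $\Delta$ to the vertex set of $\lk_\Delta(\{v\})$, observe that the color $\kappa(v)$ cannot appear there since $\{v,w\}\in\Delta$ for every $w$ in the link, and conclude that the link is $(d-1)$-colorable of dimension $d-2$. Your write-up simply spells out in more detail what the paper compresses into a single sentence.
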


\begin{proof}
The claim follows immediately from the facts that $\Delta$ is balanced, \\
$\dim\lk_\Delta(\{v\})=d-2$ for any vertex $v\in V(\Delta)$ and that $\kappa(v)\neq \kappa(w)$ for all $v\in V(\Delta)$, $w\in \lk_\Delta(\{v\})$ and any proper coloring $\kappa:\;V(\Delta)\to \{1,\ldots,d\}$.
\end{proof}

Note that the converse of the previous lemma is not necessarily true. In the following, we call a vertex $v$ (or its vertex link $\lk_\Delta(\{v\})$)  \emph{critical} if $\lk_\Delta(\{v\})$ is not balanced and \emph{uncritical} otherwise. 

Before proceeding to our construction, we describe its underlying idea. 
If we look at the simplicial complex $Q$ from \Cref{example:DGKM}, we easily see that vertices $0,3,7$ are uncritical, whereas all the other vertices are critical. Hence, by the previous lemma, those are obvious obstructions that prevent $Q$ from being balanced. The idea now is to perform some (possibly few) edge subdivisions that make the critical vertex links balanced without affecting balancedness of other uncritical vertex links and without altering the symmetry of the simplicial complex $Q$. Luckily,~--~though this is not guaranteed by \Cref{lem:vertexLinks}~--~it will turn out, that the simplicial complex, obtained in this way, is already balanced. 
We now make this idea more precise. We perform the following subdivision steps:\\

\begin{itemize}
\item[{\sf Step 1:}] We first subdivide the edge $\{2,4\}$ by introducing a new vertex $10$. In this way, the link of the former critical vertex $9$ becomes the cone over a $6$-gon and as such is balanced. \Cref{lks} shows the link of the vertex $9$ before and after the subdivision. Moreover, the permutation $\tau=(0\,7)(2\,4)(6\,8)$ is still an automorphism of the subdivided complex (see \Cref{example:DGKM}). \\
\begin{figure}[h]
	\centering
	\includegraphics[scale=0.6]{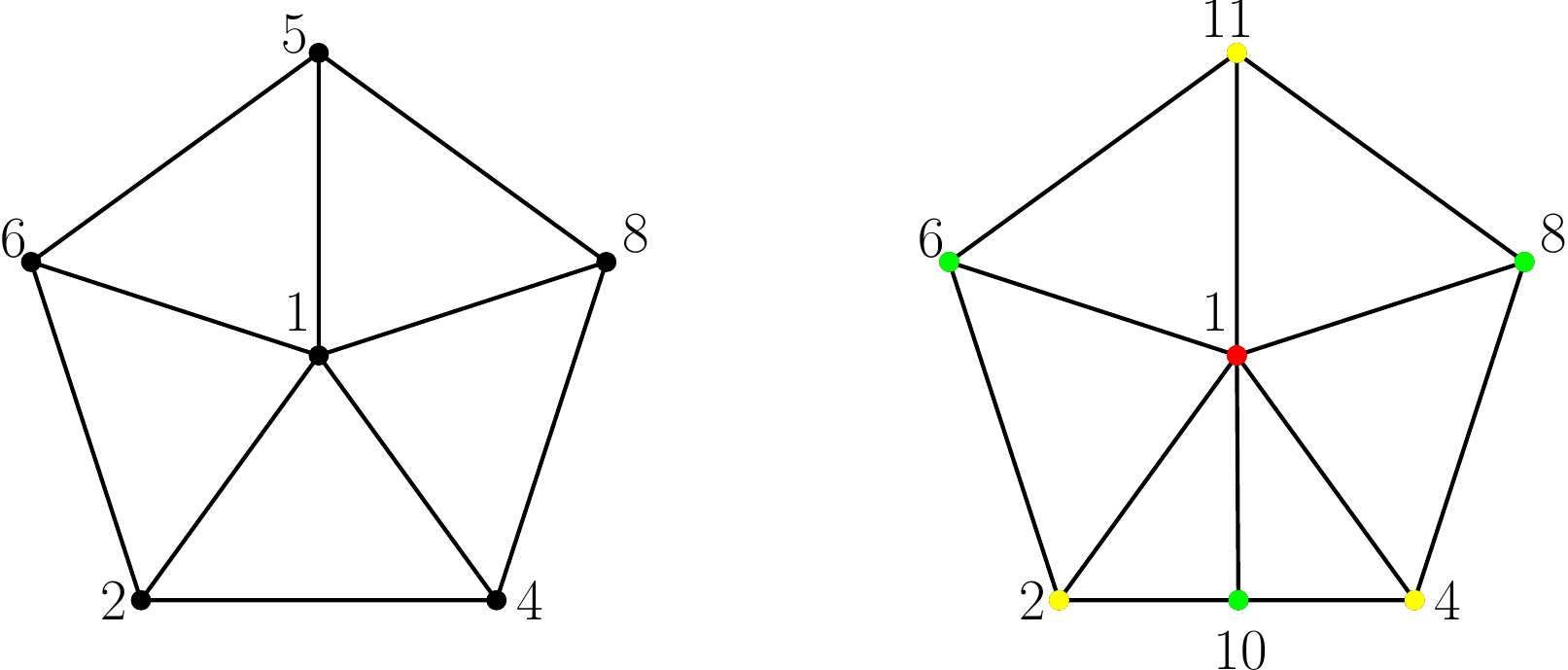}
	\caption{The link of $9$ before (\emph{left}) and after (\emph{right}) the edge subdivision of $\{2,4\}$ and $\{5,9\}$. On the right the vertices are properly colored.}
	\label{lks}
\end{figure} 
\item[{\sf Step 2:}] In the next step, we subdivide the edge $\{5,9\}$ by adding a vertex $11$. It is easy to check that the vertices $6$ and $8$ are now uncritical and that $\tau$ is still an automorphism of this new complex.\\

\item[{\sf Step 3:}] Subdividing the edges $\{0,6\}$ and $\{7,8\}$, the vertices $2$ and $4$, respectively become uncritical. Moreover, also $5$ has an uncritical link now. Labeling the new vertex on the edge $\{0,6\}$ with $12$ and the one on $\{7,8\}$ with $13$, we also see that the permutation $\tau'=(0\,7)(2\,4)(6\,8)(12\,13)$ is an automorphism of the subdivided complex.\\
\end{itemize}
We call the simplicial complex, obtained from the just described edge subdivisions $Q^*$.  The top row of \Cref{delta_balanced} depicts the front and back view of $Q^*$. It is easy to check that~--~though we did not treat the critical vertex $1$ separately~--~ the simplicial complex $Q^*$ has only uncritical vertices. We even have the following:

\begin{lemma}\label{lem:balanced}
The simplicial complex $Q^*$ constructed above is balanced.
\end{lemma}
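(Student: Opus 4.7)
The plan is to exhibit an explicit proper $4$-coloring $\kappa: V(Q^*) \to \{1,2,3,4\}$ of the $1$-skeleton of $Q^*$. Since edge subdivision preserves dimension, $\dim Q^* = 3$, so producing such a $\kappa$ is enough to conclude that $Q^*$ is balanced.

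To carry this out, I would first write down $E(Q^*)$ from the facet list of $Q$ in \Cref{example:DGKM}. Recall that subdividing an edge $\{u,v\}$ by a new vertex $w$ deletes $\{u,v\}$ and introduces the edges $\{u,w\}$, $\{v,w\}$ together with $\{w,z\}$ for every $z$ such that $\{u,v,z\}$ lies in a facet of $Q$. Applying this routine to the four prescribed subdivisions $\{2,4\}\leadsto 10$, $\{5,9\}\leadsto 11$, $\{0,6\}\leadsto 12$, $\{7,8\}\leadsto 13$ gives $E(Q^*)$ explicitly. To then cut down the search for $\kappa$ I would look for a coloring that is equivariant under the automorphism $\tau' = (0\,7)(2\,4)(6\,8)(12\,13)$ of $Q^*$. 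Since $\tau'$ has six fixed points and four $2$-cycles, this reduces the $14$ vertices to $10$ orbits, and requires the color of $0$ to agree with that of $7$, the color of $2$ with that of $4$, etc. Starting from the highly constrained vertex $1$ (adjacent to $10$ of the other $13$ vertices) and the equally high-degree vertex $5$, a short case analysis then singles out the partition
\[
\{3,5,9\} \;\sqcup\; \{0,6,7,8,10\} \;\sqcup\; \{1,12,13\} \;\sqcup\; \{2,4,11\}
\]
as a candidate for $\kappa^{-1}(1),\kappa^{-1}(2),\kappa^{-1}(3),\kappa^{-1}(4)$ respectively. The final step is a routine edge-by-edge check against the edge list computed above.

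The main obstacle is finding the coloring, not verifying it. Because of the high degrees of $1$ and $5$, the freedom to place colors is quite small, and not every $\tau'$-equivariant assignment extends to a proper $4$-coloring; for example, trying to give $\{6,8\}$ and $\{12,13\}$ the same color already fails because of the edges produced by the subdivision at $\{7,8\}$. \Cref{lem:vertexLinks} together with the observation preceding the statement guarantees that the local obstructions have been eliminated, but this condition is only necessary, not sufficient, so the existence of a global coloring still has to be established by hand. Exploiting the symmetry $\tau'$ to reduce the number of unknowns keeps this hand-check manageable.
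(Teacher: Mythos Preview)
Your proposal is correct and follows the same approach as the paper: the paper's proof simply exhibits a proper $4$-coloring of $Q^*$ (via \Cref{delta_balanced}), and your explicit partition $\{3,5,9\}\sqcup\{0,6,7,8,10\}\sqcup\{1,12,13\}\sqcup\{2,4,11\}$ is indeed a valid $\tau'$-equivariant proper $4$-coloring, as one checks directly against the facet list $\mathcal{F}(Q^*)$. Your additional discussion of how to \emph{find} the coloring via the symmetry $\tau'$ goes beyond what the paper records, but the underlying proof is the same.
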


\begin{proof}
The bottom row of \Cref{delta_balanced} shows the $3$-dimensional simplicial complex $Q^*$ together with a proper $4$-coloring.
\end{proof}
 \begin{figure}[h]
 	\centering
 	\includegraphics[scale=0.8]{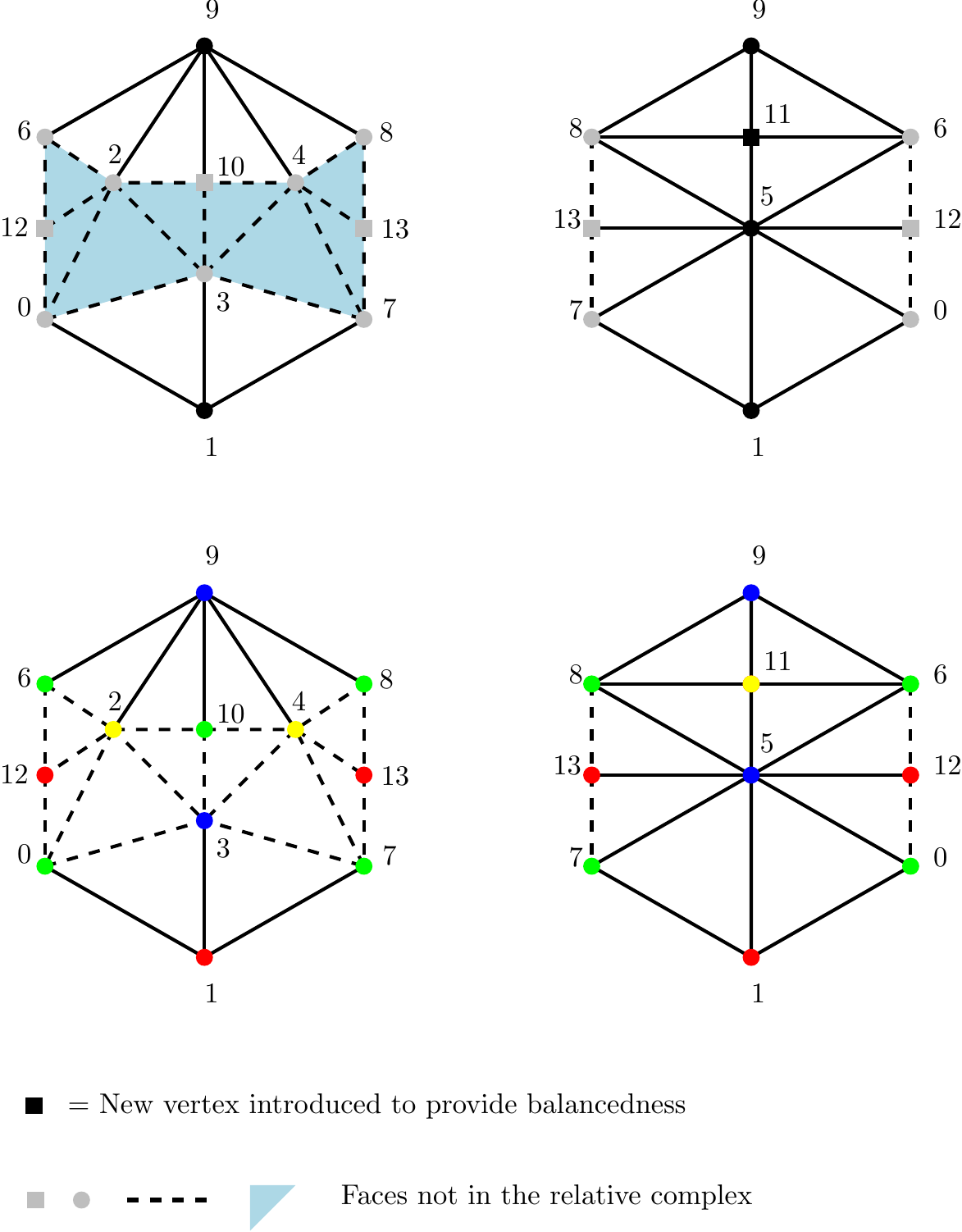}
 	\caption{Front and back view of $\Delta$. }
 	\label{delta_balanced}
 \end{figure} 
 Another reasonable approach to construct a balanced counterexample to the partitionability conjecture could have been to start with a balanced non-shellable ball and then to try to apply the technique from \cite{DGKM}. However, all examples of balanced non-shellable balls are relatively big and it is hard to see, which subcomplex one should choose then. 
 We also want to remark that applying the same edge subdivisions as above directly to Ziegler's non-shellable ball, does not produce a balanced ball. 
 
The following simple remark will be useful later. 

\begin{remark}\label{rem:simple}
If $\Delta$ is a balanced simplicial complex, then any simplicial complex built from $\Delta$ by taking a certain number of copies of $\Delta$ and identifying them along a fixed subcomplex, is balanced.
\end{remark}

We define $A^*$ of $Q^*$ to be the induced subcomplex $Q^*_{\{0,2,3,4,6,7,8,10,12,13\}}$. Note that $\dim A^*=2$ and that $A^*$ can be obtained from $A$ in the same way we constructed $Q^*$ from $Q$; namely, by subdividing the edges $\{2,4\}$, $\{0,6\}$ and $\{7,8\}$. (We do not subdivide $\{5,9\}$ since it is not present in $A$.) As edge subdivisions preserve shellability, we get the following lemma:

\begin{lemma}\label{lem:constructible}
The simplicial complexes $Q^*$ and $A^*$ are shellable, hence constructible and Cohen-Macaulay. 
\end{lemma}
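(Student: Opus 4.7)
My plan is to reduce the statement to two ingredients: (a) $Q$ and $A$ are themselves shellable, and (b) edge subdivision preserves shellability. The implications shellable $\Rightarrow$ constructible $\Rightarrow$ Cohen-Macaulay are already recorded in \Cref{sect:Combinatorics}, so once shellability of $Q^*$ and $A^*$ is established, the rest of the lemma follows for free.

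For (a), the complex $A$ is a $2$-dimensional disk consisting of five triangles arranged along a path, and an explicit shelling is obtained by ordering its facets as listed in \Cref{example:DGKM}, namely $\{0,2,6\},\{0,2,3\},\{2,3,4\},\{3,4,7\},\{4,7,8\}$; at each step the new triangle meets the union of the previous ones exactly in an edge, so the shelling condition is satisfied. Shellability of $Q$ is implicit in \cite{DGKM}, but can also be verified by hand: one orders the fourteen facets of $Q$ (for example in the order they appear in \Cref{example:DGKM}) and checks that at each stage the restriction face $R_i$ exists and is unique. This is a finite computation; the pictures in \Cref{qbar} make it essentially visual.

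For (b), I would invoke (and briefly justify) the standard fact that if $\Delta$ is a shellable pure simplicial complex and $e=\{u,v\}$ is an edge of $\Delta$, then the complex $\Delta'$ obtained by subdividing $e$ with a new vertex $w$ is again shellable. Concretely, given a shelling $F_1,\ldots,F_n$ of $\Delta$, one obtains a shelling of $\Delta'$ by replacing each facet $F_i$ that contains $e$ by the two facets $(F_i\setminus\{u\})\cup\{w\}$ and $(F_i\setminus\{v\})\cup\{w\}$ (in that order), leaving the other facets untouched. A direct check shows that the new restriction faces are well-defined: the first inserted facet has the same restriction as $F_i$ with $v$ possibly replaced by $w$, and the second inserted facet acquires $\{w\}$ as part of its restriction because its intersection with the union of its predecessors contains the new face $(F_i\setminus\{u,v\})\cup\{w\}$.

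Finally, since $Q^*$ is built from $Q$ by the four edge subdivisions of $\{2,4\}$, $\{5,9\}$, $\{0,6\}$, $\{7,8\}$, and $A^*$ is built from $A$ by the three edge subdivisions of $\{2,4\}$, $\{0,6\}$, $\{7,8\}$, a straightforward induction combining (a) and (b) yields shellability of both $Q^*$ and $A^*$. The main point requiring care is the verification in (b); once it is in place, the proof is essentially bookkeeping. The chain of implications from \Cref{sect:Combinatorics} then gives constructibility and Cohen-Macaulayness, completing the proof.
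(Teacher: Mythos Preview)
Your proposal is correct and follows exactly the argument the paper uses: the paper's entire proof is the single clause ``As edge subdivisions preserve shellability, we get the following lemma,'' which presupposes that $Q$ and $A$ are shellable (a fact taken from \cite{DGKM}). You have simply fleshed out the two ingredients (a) and (b) that the paper leaves implicit; the only caveat is that your sketch of (b) would need a little more care about the order of the two replacement facets, but you acknowledge this and the underlying fact is standard.
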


Our final goal is to apply \Cref{keythm} to the relative simplicial complex $(Q^*,A^*)$. The only ingredient missing to be able to do so is to verify that condition (3) of \Cref{keythm} is fulfilled. Indeed, we have the following statement:

\begin{theorem}\label{thm:partitionable}
The relative simplicial complex $X:=(Q^*,A^*)$ is not partitionable.
\end{theorem}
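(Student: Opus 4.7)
The plan is to argue by contradiction: assume a partitioning $\bigcupdot_i [R_i, F_i]$ of $X = (Q^*, A^*)$ exists and exhibit a face that cannot be consistently placed. Since $\dim A^* = 2 < 3 = \dim Q^*$, every facet of $Q^*$ belongs to $X$ and must appear as some $F_i$. Because $A^*$ is induced on $V(A^*) = \{0,2,3,4,6,7,8,10,12,13\}$, a face of $Q^*$ lies in $X$ if and only if it meets $\{1,5,9,11\}$. In particular, the minimal faces of $X$ are precisely the singletons $\{1\}, \{5\}, \{9\}, \{11\}$, and each of them is forced to equal $R_i$ for exactly one interval of the partitioning.

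The core step is then a case analysis on the four distinguished intervals $[\{v\}, F_v]$ with $v \in \{1,5,9,11\}$, where $F_v$ ranges over the facets of $Q^*$ containing the critical vertex $v$. The automorphism $\tau' = (0\,7)(2\,4)(6\,8)(12\,13)$ of $Q^*$ restricts to an automorphism of $A^*$ and hence acts on $X$ as well as on the set of hypothetical partitionings; I would fix a representative in each $\tau'$-orbit of quadruples $(F_1, F_5, F_9, F_{11})$, which halves the number of cases. For each surviving quadruple, I would enumerate the faces of $X$ covered by the four distinguished intervals and then check whether this covering can be extended to a partitioning of the remaining facets of $Q^*$, the aim being to exhibit a face of $X$ that is forced to be covered by two intervals simultaneously or by none at all.

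The principal obstacle I expect is the bookkeeping. Each of the four edge subdivisions splits one or two facets of $Q$ in half, so $Q^*$ has noticeably more facets than $Q$, and the interaction between the critical singletons and the new subdivision vertices produces additional case branches. To keep this manageable, I would concentrate on the low-dimensional faces supported on $\{1,5,9,11\}$ together with the freshly introduced vertices, since each subdivision vertex lies in only a handful of facets of $Q^*$ and therefore sharply constrains the admissible $R_i$. Conceptually the argument runs in parallel with the case analysis for $(Q,A)$ in \cite[proof of Theorem 3.4]{DGKM}; the essentially new content is the careful tracking of the new vertices $10, 11, 12, 13$ through the argument, which should be controllable via the $\tau'$-symmetry and the observation that the subdivisions were chosen minimally, so that most of the combinatorics of the obstruction from \cite{DGKM} is inherited unchanged.
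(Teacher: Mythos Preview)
Your setup is sound: the minimal faces of $X$ are indeed $\{1\},\{5\},\{9\},\{11\}$, each must occur as some $R_i$, and the involution $\tau'$ acts on the set of hypothetical partitionings. However, the case analysis you propose---enumerating quadruples $(F_1,F_5,F_9,F_{11})$---is not the route the paper takes, and as stated it is not feasible by hand: vertex $1$ lies in sixteen facets of $Q^*$, vertex $5$ in ten, vertex $9$ in six, and vertex $11$ in four, so even after halving by $\tau'$ you are left with on the order of two thousand quadruples, and for each you would still have to decide whether the remaining sixteen intervals can be completed. Your paragraph on ``keeping this manageable'' does not supply a concrete mechanism that cuts this down to something checkable.

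The paper avoids this explosion by never branching on the singleton placements at all. It starts instead from a face contained in a \emph{unique} facet---the triangle $\{4,8,9\}$, which lies only in $\{1,4,8,9\}$---and then repeatedly uses that certain subfaces (such as $\{4,8\}$) belong to $A^*$, and hence cannot appear in any interval, to force other faces into specific intervals. This short chain of deductions, together with one application of $\tau'$, pins the edge $\{4,5\}$ to one of three specified intervals and symmetrically $\{2,5\}$ to one of three others; a $2\times 2$ case split (on whether the first option in each list is taken) then forces the singleton $\{5\}$ into two distinct intervals. The entire argument thus has four cases, not thousands. The idea missing from your proposal is precisely this entry point: locate faces that sit in a unique facet and propagate constraints through the faces of $A^*$, rather than branching on the singleton intervals up front.
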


\begin{proof}
The proof uses similar ideas as the ones employed in the proof of \cite[Theorem 3.3]{DGKM}. In fact, some parts are even verbatim the same but we include them for sake of completeness. 

Assume by contradiction that $X$ is partitionable. We will show that, in this case, the vertex $5$ has to be contained in at least two intervals of any partitioning, which gives a contradiction.\\
For the sake of clearness we list the facets of both $Q^*$ and $A^*$.
 	\begin{align*}
 	\mathcal{F}\left(Q^*\right)=\big\{&\{2, 5, 6, 12\}, \{1, 5, 8, 11\}, \{0, 1, 2, 5\},\{1, 4, 8, 9\}, \{1, 2, 3, 10\}, \{4, 5, 8, 13\}, \\
 	&\{0, 1, 2, 3\}, \{1, 4, 9, 10\},\{1, 2, 9, 10\}, \{1, 2, 5, 6\}, \{0, 2, 5, 12\}, \{4, 5, 7, 13\},\\
 	& \{1, 4, 5, 8\}, \{1, 2, 6, 9\}, \{1, 3, 4, 7\}, \{1, 3, 4, 10\},\{1, 5, 6, 11\}, \{1, 4, 5, 7\},\\
 	& \{1, 6, 9, 11\}, \{1, 8, 9, 11\}\big\}\\
\mathcal{F}\left( A^*\right)=&\big\{\{ 0, 2, 3\} , \{ 4, 7, 13\} , \{ 3, 4, 10\} , \{0, 2, 12\} , \{3, 4, 7\} , \{2, 3, 10\}, \{2, 6, 12\} ,\\
& \{ 4, 8, 13\}\big\} 
 	\end{align*}
Given a partitioning $\mathcal{P}$ of $X$ and a facet $F\in\mathcal{F}(Q^*)$, we denote by $I_F$ the interval of $\mathcal{P}$ with top element $F$. \\
As $\{1,4,8,9\}$ is the only facet containing the triangle $\{4,8,9\}$, we have $\{4,8,9\}\in I_{\{1,4,8,9\}}$. If also $\{1,4,8\}\in I_{\{1,4,8,9\}}$, it follows that $I_{\{1,4,8,9\}}$ must contain $\{4,8\}=\{4,8,9\}\cap\{1,4,8\}$, which is a contradiction since $\{4,8\}\in A^*$. Therefore, $\{1,4,8\}\notin I_{\{1,4,8,9\}}$ and since $\{1,4,5,8\}$ is the only other facet of $Q^*$ containing $\{1,4,8\}$, we conclude that $\{1,4,8\}\in I_{\{1,4,5,8\}}$. Again, as $\{4,8\}\in A^*$ and $\{4,8\}=\{1,4,8\}\cap\{4,5,8\}$, it must hold that $\{4,5,8\}\notin I_{\{1,4,5,8\}}$ and hence also $\{4,5\}\notin I_{\{1,4,5,8\}}$. The other facets of $X$ containing $\{4,5\}$ are
\begin{equation}\label{eq:457}
\{4,5,8,13\},\{4,5,7,13\},\{1,4,5,7\}.
\end{equation}
Using that $\tau'$ is an automorphism of $Q^*$ and $A^*$, the same line of arguments applied to $\{2,6,9\}$ yields that the edge $\{2,5\}$ has to be contained in an interval with one of the following top elements:
\begin{equation*}
\{2,5,6,12\},\{0,2,5,12\},\{0,1,2,5\}.
\end{equation*}

We now distinguish four cases:\\
{\sf Case 1:} $\{4,5\}\in I_{\{4,5,8,13\}}$ and $\{2,5\}\in I_{\{2,5,6,12\}}$\\
As $\{4,5,8,13\}$ is the only facet containing $\{5,8,13\}$, we must have $\{5,8,13\}\in \{4,5,8,13\}$. As $\{5\}=\{4,5\}\cap\{5,8,13\}$ we infer that that $\{5\}\in I_{\{4,5,8,13\}}$. Similarly, using again that $\tau'$ is an automorphism of $Q^*$ and $A^*$, we get that $\{5\}\in I_{\{2,5,6,12\}}$. Hence $\{5\}$ is contained in two intervals, which is a contradiction.\\

\noindent {\sf Case 2:} $\{4,5\}\notin I_{\{4,5,8,13\}}$ and $\{2,5\}\notin I_{\{2,5,6,12\}}$\\
As $\{4,5\}\notin I_{\{4,5,8,13\}}$ it follows from \eqref{eq:457} that $\{4,5\}\in I_{\{4,5,7,13\}}$ or $\{4,5\}\in I_{\{1,4,5,7\}}$. Since $\{4,5,7,13\}$ and $\{1,4,5,7\}$ are also the only facets of $Q^*$ containing $\{4,5,7\}$ and since $\{4,5\},\{5,7\}\subseteq \{4,5,7\}$, it follows that they have to lie in the same interval, together with $\{5\}=\{4,5\}\cap\{5,7\}$. Therefore, we either have 
\begin{equation}\label{eq:case2}
\{5\}\in I_{\{1,4,5,7\}}\qquad \mbox{or}\qquad \{5\}\in I_{\{4,5,7,13\}}.
\end{equation}
Applying the automorphism $\tau'$ to the above argument yields 
\begin{equation*}
\{5\}\in I_{\{0,1,2,5\}}\qquad \mbox{or}\qquad \{5\}\in I_{\{0,2,5,12\}}.
\end{equation*}
Hence, $\{5\}$ belongs to two intervals, which is a contradiction.\\

\noindent {\sf Case 3:} $\{4,5\}\notin I_{\{4,5,8,13\}}$ and $\{2,5\}\in I_{\{2,5,6,12\}}$\\
 As $\{4,5\}\notin I_{\{4,5,8,13\}}$, the argument of Case 2 shows that \eqref{eq:case2} holds. We now show that $\{5\}$ has to lie in a second interval. \\
Note that the only two facets containing $\{5,12\}$ are $\{2,5,6,12\}$ and $\{0,2,5,12\}$. Since both of these contain $\{2,5,12\}$ and $\{5,12\}\subseteq \{2,5,12\}$, it follows that $\{5,12\}$ and $\{2,5,12\}$ have to belong to the same interval. Moreover, since $\{2,5\}\in I_{\{2,5,6,12\}}$ by assumption, we must have $\{2,5,12\}\in I_{\{2,5,6,12\}}$ and hence $\{5,12\}\in I_{\{2,5,6,12\}}$. Finally, this implies $\{5\}=\{2,5\}\cap \{5,12\}\in I_{\{2,5,6,12\}}$ and therefore again $\{5\}$ lies in two intervals, which is a contradiction.\\

\noindent{\sf Case 4:} $\{4,5\}\in I_{\{4,5,8,13\}}$ and $\{2,5\}\notin I_{\{2,5,6,12\}}$\\
We reach a contradiction in this case by applying the automorphism $\tau'$ to the arguments of Case 3. This finishes the proof.
\end{proof}

The (relative) simplicial complexes $Q^*,A^*$ and $X=(Q^*,A^*)$ have the following $f$-vectors:
 \begin{align*}
 &f(Q^*)=(1,14,45,52,20)\\
 &f(A^*)=(1,10,17,8)\\
 &f(X)=(0,4,28,44,20).
 \end{align*}
 In particular, the subcomplex $A^*$ has a total number of $36$ faces. \Cref{keythm},  \Cref{thm:partitionable}, \Cref{lem:constructible} and \Cref{rem:simple} therefore imply our main result:
 
 \begin{theorem}\label{mainresult1}
 	The simplicial complex $C_{37}$ constructed from $37$ disjoint copies of $Q^*$ and identifying them along $A^*$ is balanced, Cohen-Macaulay and not partitionable. 
 \end{theorem}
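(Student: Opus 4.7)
The plan is to read the statement as a straightforward consolidation of the preceding lemmas and observations: the theorem is obtained by feeding the relative complex $X=(Q^*,A^*)$ into \Cref{keythm} with $N=37$ and then invoking \Cref{rem:simple} separately to handle balancedness. So the strategy is to verify the three hypotheses of \Cref{keythm} in order, check the numerical bound on $N$, and then combine the conclusions.

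First, I would verify hypotheses (1)--(3) of \Cref{keythm}. Condition (1), that both $Q^*$ and $A^*$ are Cohen-Macaulay, is exactly the content of \Cref{lem:constructible}. For condition (2), $A^*$ was defined as the induced subcomplex $Q^*_{\{0,2,3,4,6,7,8,10,12,13\}}$, so inducedness is built in; the inequality $\dim A^*=2=\dim Q^*-1$ gives codimension exactly one, hence at most one. Condition (3), the non-partitionability of $X$, is precisely \Cref{thm:partitionable}.

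Next, I would check the size requirement. The $f$-vector $f(A^*)=(1,10,17,8)$ computed just before the theorem yields $1+10+17+8=36$ total faces of $A^*$, and $37>36$, so \Cref{keythm} applies with $N=37$ and concludes that $C_{37}$ is Cohen-Macaulay and not partitionable.

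Finally, for balancedness, I would apply \Cref{lem:balanced}, which states that $Q^*$ is balanced (via the explicit $4$-coloring in \Cref{delta_balanced}), and then \Cref{rem:simple}, which guarantees that any complex formed by identifying copies of a balanced complex along a fixed subcomplex is itself balanced. This applies verbatim to $C_{37}$ and finishes the proof. There is essentially no obstacle here beyond ensuring the bookkeeping of the hypotheses is complete, since all substantive work has been done in the preceding lemmas; the only minor check worth performing carefully is that $A^*\subseteq Q^*$ is still induced after the three edge subdivisions, which is clear from its defining description as $Q^*_{\{0,2,3,4,6,7,8,10,12,13\}}$.
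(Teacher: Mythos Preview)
Your proposal is correct and matches the paper's own argument essentially verbatim: the paper simply notes that $A^*$ has $36$ faces and then cites \Cref{keythm}, \Cref{thm:partitionable}, \Cref{lem:constructible}, and \Cref{rem:simple} together as implying the theorem. Your write-up just spells out explicitly which hypothesis each reference handles (and additionally invokes \Cref{lem:balanced} to feed into \Cref{rem:simple}), which is fine.
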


Analogous to the situation in \cite[Theorem 3.5]{DGKM} we note that a much smaller counterexample to the balanced partitionability conjecture can be found by glueing together only 3 copies of $Q^*$.

 \begin{theorem}\label{mainresult2}
 The simplicial complex $C_{3}$ obtained by taking 3 disjoint copies of $Q^*$ and identifying them along $A^*$ is balanced, Cohen-Macaulay and not partitionable. 
 \end{theorem}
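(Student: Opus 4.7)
My plan is to verify each of the three properties of $C_3$ in the statement. Balancedness is immediate from \Cref{rem:simple} together with \Cref{lem:balanced}: since $C_3$ is built from three copies of the balanced complex $Q^*$ glued along $A^*$, it is balanced. For Cohen-Macaulayness I observe that $C_3$ is constructible: by \Cref{lem:constructible}, $Q^*$ and $A^*$ are both constructible, and $A^*$ has codimension $1$ in $Q^*$. Hence $C_2:=Q^*_1\cup Q^*_2$ (intersection $A^*$) is constructible of dimension $3$, and applying the definition once more with $C_3=C_2\cup Q^*_3$ (intersection again $A^*$) yields $C_3$ constructible, hence Cohen-Macaulay.

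The substantial work lies in showing that $C_3$ is not partitionable. Assuming for contradiction that a partitioning exists, the plan is to run the forcing argument of \Cref{thm:partitionable} simultaneously in each copy $Q^*_j$ ($j=1,2,3$). The new feature compared to the relative setting of \Cref{thm:partitionable} is that faces of $A^*$, which previously gave instant contradictions, are now faces of $C_3$ that must be covered by exactly one interval each. So the "contradictions" become constraints pinning down which interval covers each $A^*$-face; with only three copies available, these constraints cannot all be satisfied.

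More precisely, denote by $1_j,5_j,9_j,11_j$ the copies in $Q^*_j$ of the vertices outside $V(A^*)$. The forcing starting from $\{4,8,9_j\}$ (whose only containing facet in $C_3$ is $\{1_j,4,8,9_j\}$, as $9_j$ is copy-specific) terminates in one of two ways: either (a) $\{4,8\}\in A^*$ is covered by an interval whose top is $\{1_j,4,8,9_j\}$ or $\{1_j,4,5_j,8\}$, or (b) the chain runs to completion and, by the case analysis of \Cref{thm:partitionable}, forces $\{5_j\}$ into an interval whose top belongs to $\mathcal{L}_j:=\{\{4,5_j,8,13\},\{4,5_j,7,13\},\{1_j,4,5_j,7\}\}$. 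Applying the automorphism $\tau'=(0\,7)(2\,4)(6\,8)(12\,13)$ to the parallel forcing starting from $\{2,6,9_j\}$, the alternative is: either (a') $\{2,6\}$ is covered by an interval with top $\{1_j,2,6,9_j\}$ or $\{1_j,2,5_j,6\}$, or (b') $\{5_j\}$ lies in an interval with top in $\mathcal{L}'_j:=\{\{2,5_j,6,12\},\{0_j,2,5_j,12\},\{0_j,1_j,2,5_j\}\}$. Since $\mathcal{L}_j\cap\mathcal{L}'_j=\emptyset$, if both (b) and (b') hold in some copy $j$, the vertex $\{5_j\}$ lies in two distinct intervals of the partitioning---a contradiction.

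The proof concludes by pigeonhole. The face $\{4,8\}$ lies in a unique interval of the partitioning, so condition (a) can hold for at most one value of $j$; likewise (a') holds for at most one $j$. Hence at most two copies admit an abort, leaving some copy $j_0$ where both (b) and (b') hold, giving the required contradiction. The main obstacle I anticipate is verifying that (a) and (a') are the only ways the forcing can avoid the final contradiction: one must check, in particular, that if $\{4,8\}$ is covered by the third facet $\{4,5_j,8,13\}$ containing it in copy $j$, the chain still runs to completion, because the derivation $\{4,5_j,8\}\in I_{\{4,5_j,8,13\}}$ remains consistent with $\{4,8\}$ living in that same interval.
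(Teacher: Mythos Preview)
Your proof is correct and follows essentially the same approach as the paper, which simply states that the argument is verbatim that of \cite[Theorem~3.5]{DGKM} with $\tau$ replaced by $\tau'$; you have reconstructed precisely that pigeonhole-over-three-copies argument. One small slip: vertex $0$ lies in $V(A^*)=\{0,2,3,4,6,7,8,10,12,13\}$, so in your list $\mathcal{L}'_j$ the entries should read $\{0,2,5_j,12\}$ and $\{0,1_j,2,5_j\}$ rather than $\{0_j,2,5_j,12\}$ and $\{0_j,1_j,2,5_j\}$; this does not affect the argument.
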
	
We omit the proof of the above theorem since it is verbatim the same as the one of Theorem 3.5 in \cite{DGKM}, if one exchanges the automorphism $\tau$ by $\tau'$.

The $f$-vector of the simplicial complex  $C_3$ is $f(C_3)=(1,22,101,140,60)$.

\begin{remark}
We do not know if $C_3$ is the smallest balanced simplicial complex that is Cohen-Macaulay but not partitionable. However, it is easy to see, e.g., by solving a certain integer linear programm, that $C_2$ is partitionable. Moreover, it is not possible to change $Q$ into  a balanced simplicial complex by fewer than $4$ edge subdivisions. On the other hand, if one finds a counterexample to the partitionability conjecture, which is smaller than the one of \cite{DGKM}, then it might well be the case that one can also construct a counterexample to the balanced partitionability conjecture that is smaller than $C_3$.
\end{remark}

\begin{remark}
As $Q^*$ and $A^*$ are both constructible by \Cref{lem:constructible}, it follows by definition that $C_3$ is also constructible. The simplicial complex $C_3$ therefore is the first balanced counterexample to the conjecture that every constructible simplicial complex is partitionable \cite[\S 4]{Hachimori}.
\end{remark}

\begin{remark}
We still do not know if every barycentric subdivision of a Cohen-Macaulay simplicial complex is partitionable \cite{GAR}. If this is not the case, then a corresponding counterexample has to be the barycentric subdivision of a non-partitionable Cohen-Macaulay simplicial complex.  Xuan Thanh Le helped us to verify that the barycentric subdivision of the simplicial complex $C_3$ from \cite{DGKM} is partitionable.
\end{remark}

\section*{Acknowledgement}
The authors would like to thank Xuan Thanh Le for his help in solving certain integer linear programms efficiently, while searching for a counterexample.

\bibliographystyle{alpha}

\bibliography{bibliography}

\end{document}